\newcommand{\ra}{\rangle}
\newcommand{\la}{\langle}
\newcommand{\until}{\upharpoonright}
\newcommand{\zj}{\emptyset'}
\theoremstyle{plain}
\newtheorem{theorem}{Theorem}
\newtheorem{lemma}[theorem]{Lemma}
\newtheorem{corollary}[theorem]{Corollary}
\theoremstyle{definition}
\newtheorem{definition}[theorem]{Definition}
\theoremstyle{remark}
\numberwithin{equation}{section}
\begin{document}

\title{Cupping with random sets}
\date{June 5, 2012}

\author{Adam R. Day}
\address{Department of Mathematics\\
University of California, Berkeley\\
Berkeley, CA 94720-3840 USA}
\email{adam.day@math.berkeley.edu}

\author{Joseph S.~Miller}
\address{Department of Mathematics\\
University of Wisconsin\\
Madison, WI 53706-1388, USA}
\email{jmiller@math.wisc.edu}

\thanks{The first author was supported by a Miller Research Fellowship in the Department of Mathematics at the University of California, Berkeley. The second author was supported by the National Science Foundation under grant DMS-1001847.}

\makeatletter
\@namedef{subjclassname@2010}{\textup{2010} Mathematics Subject Classification}
\makeatother
\subjclass[2010]{Primary 03D32; Secondary 68Q30, 03D30}


\begin{abstract}
We prove that a set is $K$-trivial if and only if it is not weakly ML-cuppable. Further, we show that a set below zero jump is $K$-trivial if and only if it is not ML-cuppable. These results settle a question of Ku\v{c}era, who introduced both cuppability notions.
\end{abstract}

\maketitle

\section{Introduction}

The study of algorithmic randomness has uncovered a
remarkable relationship between sets that are highly random and sets that have no random content at all.
In this paper, we strengthen this relationship by looking at the computational power of a set joined with an incomplete random set. The sets with no random content are known as the $K$-trivials.
We will prove that the $K$-trivial sets are precisely those that cannot be joined above $\zj$ with an incomplete random. We will also show that all sets below $\zj$ that are not $K$-trivial can be joined to $\zj$ with a low random.

The question as to whether the $K$-trivial sets could be characterized in this manner was originally posed by Ku\v{c}era \cite{Nies_2007}. 
This question appears in Miller and Nies's paper \textit{Randomness and computability: open questions} as well as Downey and Hirschfeldt's recent monograph \textit{Algorithmic Randomness and Complexity} \cite{Down_Hirs_2010,Mill_Nies_2006}.

We will assume familiarity with the basic theory of algorithmic randomness. For an introduction to this topic, see the recent books by Downey and Hirschfeldt and by Nies \cite{Down_Hirs_2010, Nies_2009}. We adopt the notational conventions used by Downey and Hirschfeldt. Recall that a set $A$ is \textit{$K$-trivial} if for some constant $c$, for all $n$ we have that $K(A\until n) \le K(n) +c$ (where $K(n)$ is defined to be $K(1^n)$).
 Hence, a $K$-trivial set is indistinguishable from a computable set in terms of $K$ complexity. The existence of non-computable $K$-trivial sets was first established by Solovay in an unpublished manuscript \cite{Down_Hirs_2010}. Later, Zambella constructed a non-computable $K$-trivial c.e.\ set \cite{Zamb_1990}.

A set $R$ is \textit{Martin-L\"of random} if there is a constant $c$ such that for all $n$,
$K(R\until n) > n-c$. The definition of a Martin-L\"of random set can be relativized to any oracle $A$. We call a set $A$ \textit{low for Martin-L\"of randomness} if every Martin-L\"of random set is also 
Martin-L\"of random relative to $A$.

The following theorem, a combination of results by Nies and by Hirschfeldt, Nies and Stephan, shows the relationship between Martin-L\"of randomness and $K$-triviality \cite{Hirs_Nies_Step_2006, Nies_2005}.

\begin{theorem}[Nies; Hirschfeldt, Nies and Stephan]\label{thm:K-trivial}
The following are equivalent:
\begin{enumerate}
\item $A$ is $K$-trivial.
\item $A$ is low for Martin-L\"of randomness.
\item There exists $X \ge_T A$ such that $X$ is Martin-L\"of random relative to~$A$.
\end{enumerate}
\end{theorem}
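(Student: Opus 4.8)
The plan is to prove the cycle (i) $\Rightarrow$ (ii) $\Rightarrow$ (iii) $\Rightarrow$ (i). The middle implication is by far the cheapest, so I would dispatch it first. By the Ku\v{c}era--G\'acs theorem every set lies Turing below some Martin-L\"of random real, so fix $X \ge_T A$ with $X$ Martin-L\"of random. If $A$ is low for Martin-L\"of randomness, then $X$, being random, remains random relative to $A$; together with $X \ge_T A$ this is exactly condition (iii). Thus (ii) $\Rightarrow$ (iii) is a one-line consequence of Ku\v{c}era--G\'acs and the definition of lowness.

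For (i) $\Rightarrow$ (ii) I would factor through the property of being \emph{low for $K$}, meaning $K(\sigma) \le K^A(\sigma) + O(1)$ for all strings $\sigma$. Granting this, the passage to lowness for randomness is routine via the Levin--Schnorr theorem: if $Y$ is Martin-L\"of random then $K(Y \upharpoonright n) \ge n - O(1)$, so the low-for-$K$ inequality gives $K^A(Y \upharpoonright n) \ge K(Y \upharpoonright n) - O(1) \ge n - O(1)$, whence $Y$ is random relative to $A$. The substantial content, and what I expect to be the main obstacle, is the implication ``$K$-trivial $\Rightarrow$ low for $K$.'' Here I would invoke the decanter / golden-run method: starting from a $K$-triviality constant for $A$, one enumerates a Kraft--Chaitin (bounded request) set that, for each string $\sigma$ and each $A$-description of $\sigma$, issues an unrelativized request of comparable length, thereby building a machine witnessing $K(\sigma) \le K^A(\sigma) + O(1)$. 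The difficulty is that $A$ is only approximated, so the same $\sigma$ may acquire many descriptions as the approximation settles; the golden-run argument is precisely the device that bounds the total weight requested, ensuring the Kraft--Chaitin set is legitimate. Verifying this weight bound is the delicate, technically heavy core of the whole theorem.

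For the remaining implication (iii) $\Rightarrow$ (i) I would show that a base for randomness is $K$-trivial. Fix $X \ge_T A$ with $A = \Phi^X$ and $X$ random relative to $A$. The idea is to leverage the $A$-randomness of $X$ to compress the initial segments of $A$. For each $n$, the set $\{Y : \Phi^Y \supseteq A \upharpoonright n\}$ is uniformly $A$-c.e.\ and contains $X$, so initial segments of $X$ certify $A \upharpoonright n$. I would couple a Kraft--Chaitin set, which requests a description of $A \upharpoonright n$ of length about $K(n)$ whenever a short description of $n$ appears, with an $A$-Martin-L\"of test designed so that any overflow of the requested weight forces the test to capture $X$. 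Since $X$ escapes every $A$-test, the weight stays bounded, the Kraft--Chaitin set is valid, and it witnesses $K(A \upharpoonright n) \le K(n) + O(1)$, i.e.\ $K$-triviality. As in the previous step, the crux is a measure-accounting argument: one must verify that escaping the $A$-test really does cap the total weight, which is where the hypothesis that $X$ is random over $A$ does all the work.
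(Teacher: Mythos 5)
The paper does not prove this theorem; it is quoted as a black box, being a combination of results of Nies (the equivalence of $K$-triviality and lowness for Martin-L\"of randomness) and of Hirschfeldt, Nies and Stephan (that bases for Martin-L\"of randomness are $K$-trivial), so there is no in-paper proof to compare against. Your decomposition is exactly the one used in those cited sources: (ii)$\Rightarrow$(iii) via Ku\v{c}era--G\'acs is correct as stated; (i)$\Rightarrow$(ii) by factoring through lowness for $K$ and then applying Levin--Schnorr is Nies's route, with the golden-run/decanter weight bound being the genuinely hard step; and (iii)$\Rightarrow$(i) is the Hirschfeldt--Nies--Stephan ``bases for randomness'' argument, whose crux is, as you say, the measure accounting showing that $A$-randomness of $X$ caps the weight of the bounded request set. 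Be aware that what you have written is an architecture, not a proof: the two steps you correctly identify as the core (the golden-run weight bound and the test-versus-weight accounting) are precisely the parts that are only named, and each is a substantial construction in its own right. As an outline of how the theorem is actually proved in the literature, it is accurate.
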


Posner and Robinson proved that any non-computable set that is Turing below $\zj$ can be cupped to $\zj$ with a 1-generic set \cite{Posn_Robi_1981}. This is a fundamental result in computability theory. 
In 2004, Ku\v{c}era asked which sets below $\zj$ can be cupped to $\zj$ with an incomplete Martin-L\"of random \cite{Nies_2007}. In other words, does the Posner-Robinson theorem hold if we replace Baire category with Lebesgue measure, and if not, for which sets does it fail? This question motivated the following definition.
\begin{definition}
A set $A$ is \textit{weakly ML-cuppable} if there is an incomplete Martin-L\"of random set $X$ such that $A\oplus X \ge_T \zj$. If one can choose $X <_T \zj$, then $A$ is \textit{ML-cuppable}.
\end{definition}

Ku\v{c}era suggested, quite correctly, that these notions might characterize non-$K$-triviality. Nies gave a partial answer to Ku\v{c}era's question by showing that there exists a non-computable $K$-trivial c.e.\ set that is not weakly ML-cuppable~\cite{Nies_2007}.

\section{Proving Non-ML-cuppability}

\begin{theorem} No $K$-trivial set is weakly ML-cuppable.
\label{thm: notcuppable}
\end{theorem}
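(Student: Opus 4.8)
The plan is to prove the statement in the following sharp contrapositive form: if $A$ is $K$-trivial, $X$ is Martin-L\"of random, and $A\oplus X \ge_T \zj$, then in fact $X \ge_T \zj$. This suffices, since it shows that no \emph{incomplete} random can witness weak ML-cuppability of $A$. First I would pass to relative randomness: by Theorem~\ref{thm:K-trivial}, a $K$-trivial $A$ is low for Martin-L\"of randomness, so $X$ is Martin-L\"of random relative to $A$; moreover $A$ is $\Delta^0_2$, so $A \le_T \zj$ and $A$ carries a computable approximation $\{A_s\}$. I would emphasize at the outset that the argument must use the full strength of $K$-triviality and not merely lowness: by the companion result announced in the abstract, every non-$K$-trivial set below $\zj$ — including low ones — is ML-cuppable, so the \emph{compressibility} of $A$ has to enter in an essential way.

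For the main step I would fix a Turing functional $\Phi$ with $\Phi^{A\oplus X} = \zj$ and try to decide $\zj$ using $X$ as the only oracle. The natural attempt is: to decide whether $n \in \zj$, search (with oracle $X$) for a stage $s$ and a string $\sigma$ agreeing with $A_s$ on its length such that $\Phi^{\sigma\oplus X}(n)\downarrow$, and trust the resulting value. If every accepted $\sigma$ were a correct initial segment of $A$ this would compute $\zj$; the obstacle is \emph{deception}, where an incorrect $\sigma$ produces a convergent but wrong computation and misleads the search on infinitely many inputs. So the whole proof reduces to showing that our particular $X$ is deceived only finitely often.

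This last point is the heart of the argument and the step I expect to be the main obstacle. The plan is to collect the deceptive configurations into a single test that $X$, being random relative to $A$, cannot pass. Each deceptive computation $\Phi^{\sigma\oplus X}(n)\downarrow$ with an incorrect $\sigma$ forces $X$ into an open set of measure at most $2^{-v}$, where $v$ is the use on the $X$ side; one must then show that the total weight, summed over the candidate strings $\sigma$ arising along $\{A_s\}$ and over all inputs $n$, is finite. This finiteness is exactly where $K$-triviality is needed and where nothing weaker will do: using a standard equivalent characterization of $K$-triviality (low for $K$, or the cost-function characterization), the compressibility of the initial segments of $A$ bounds both the number and the weight of the wrong guesses that can ever be entertained, turning the collection into a genuine Martin-L\"of test relative to $A$ of finite measure. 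Since $X$ is Martin-L\"of random relative to $A$, it escapes this test, so deception halts after some finite stage; folding the finitely many exceptions in as advice, the search decides $\zj$ from $X$, giving $X \ge_T \zj$. Pinning down the precise weighting so that the $K$-triviality bound really controls the measure of the test is where essentially all of the technical work resides.
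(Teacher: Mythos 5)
Your reduction to the contrapositive and your first step (by Theorem~\ref{thm:K-trivial}, a $K$-trivial $A$ is low for Martin-L\"of randomness, so $X$ is random relative to $A$) match the paper, but from there the two arguments diverge completely, and yours has a genuine gap at exactly the point you flag as ``where essentially all of the technical work resides.'' The proposed test has no reason to have finite, let alone effectively shrinking, measure. Fix a single incorrect guess $\sigma$ that appears along the approximation $\{A_s\}$. For each input $n$, the set of oracles $Y$ with $\Phi^{\sigma\oplus Y}(n)\downarrow$ can have measure arbitrarily close to $1$; charging $2^{-v}$ per computation ($v$ the use on the $X$-side) and summing over the minimal oracle strings just recovers that measure. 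Summing over all $n$ already diverges for this one $\sigma$, before you ever sum over the (infinitely many, since $A$ is non-computable) wrong guesses. Nothing in $K$-triviality repairs this: the low-for-$K$ and cost-function characterizations control quantities weighted by $2^{-K(w)}$ attached to the positions where $A_s$ changes; they say nothing about the measures of the domains of an arbitrary functional $\Phi$ queried on the $X$-side, which is what your ``deception'' sets are made of. So the claim that the deceptive configurations assemble into a Martin-L\"of test relative to $A$ is unsupported, and I see no way to weight the components so that it becomes one.

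This is not a minor technicality: direct ``guess $A$ and certify by a test'' attacks of this kind are what people tried during the years the problem was open, and the reason the paper's proof works is that it substitutes two specific structural facts for your finiteness claim. First, Theorem~\ref{thm: zero density} (Bienvenu, H\"olzl, Miller, Nies) converts the hypothesis $A\oplus X\ge_T\zj$ into a \emph{single} $\Pi^0_1(A)$ class $P_A$ containing $X$ in which $X$ has lower density zero; second, Theorem~\ref{thm: covers} (the covering property of $K$-trivials) replaces the bounded $A$-c.e.\ set of strings generating the complement of $P_A$ by a bounded c.e.\ superset, yielding an unrelativized $\Pi^0_1$ class $P\subseteq P_A$ still containing $X$, whereupon the unrelativized direction of Theorem~\ref{thm: zero density} forces $X$ to be complete. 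If you want to salvage your outline, you would need to either prove the density theorem and the covering theorem yourself or find some other mechanism that turns $K$-triviality into a bound on the measure of your deception sets; as written, that step fails.
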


The proof of Theorem~\ref{thm: notcuppable}
builds on work of Franklin and Ng, and Bienvenu, H\"olzl, Miller and Nies. Franklin and Ng characterized the incomplete Martin-L\"of random sets in terms of tests consisting of differences of $\Sigma^0_1$ classes \cite{Fran_Ng_2011}. 
Recently, Bienvenu, H\"olzl, Miller and Nies showed that the incomplete Martin-L\"of random sets are exactly those Martin-L\"of random sets for which a particular density property fails \cite{BiHoMiNi:12}. Let $P$ be a measurable set and $\tau$ a finite string. We write $d(\tau, P)$ for $\mu([\tau] \cap P) \cdot 2^{|\tau|}$ where $\mu$ is the Lebesgue measure. This is the density of $P$ above $\tau$.
We say that $X$ has lower density zero in $P$ if 
for all $\delta > 0$ there exists an $n$ such 
$d( [X\until n], P) < \delta$.

\begin{theorem}[Bienvenu, H\"olzl, Miller, Nies]
\label{thm: zero density} If $X$ is a Martin-L\"of random set, then $X$ is complete if and only if there exists a $\Pi^0_1$ class $P$ such that $X\in P$ and $X$ has lower density zero in $P$.
\end{theorem}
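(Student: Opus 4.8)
The plan is to prove both implications by relating the density condition to the Franklin--Ng characterization of the incomplete Martin-L\"of randoms \cite{Fran_Ng_2011}: an ML-random set is Turing complete if and only if it fails some \emph{difference test}, where a difference test is a uniformly $\Sigma^0_1$ sequence $(A_k)_{k\in\setN}$ together with a $\Sigma^0_1$ class $B$ with $\mu(A_k\setminus B)\le 2^{-k}$, and $X$ fails it if $X\in\bigcap_k(A_k\setminus B)$. Thus it suffices to show that, for ML-random $X$, the existence of a $\Pi^0_1$ class $P\ni X$ in which $X$ has lower density zero is equivalent to $X$ failing some difference test. The class $P$ and its complement $U=2^\omega\setminus P$ are the basic objects: since $U$ is $\Sigma^0_1$, for each string $\sigma$ the quantity $\mu([\sigma]\cap P)=2^{-|\sigma|}-\mu([\sigma]\cap U)$ is upper-semicomputable, so $d(\sigma,P)=\mu([\sigma]\cap P)\cdot 2^{|\sigma|}$ is upper-semicomputable uniformly in $\sigma$, which is what lets me enumerate the tests below.

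For the direction from density zero to completeness, suppose $X$ has lower density zero in a $\Pi^0_1$ class $P\ni X$ with complement $U$. For each $k$ let $G_k$ be the $\Sigma^0_1$ class generated by the prefix-minimal strings $\sigma$ with $d(\sigma,P)<2^{-k}$; this is a legitimate enumeration because $d(\cdot,P)$ is upper-semicomputable. Since the generators form a prefix-free set and each contributes $\mu([\sigma]\cap P)<2^{-k}2^{-|\sigma|}$, summing gives $\mu(G_k\cap P)<2^{-k}$. Taking $A_k=G_k\cup U$ and $B=U$ yields $A_k\setminus B=G_k\cap P$, so $(A_k),B$ is a difference test. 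The lower-density-zero hypothesis gives, for each $k$, some $n$ with $d(X\until n,P)<2^{-k}$, whence $X\in G_k$; together with $X\in P$ this places $X\in\bigcap_k(A_k\setminus B)$, so $X$ fails the test, and by Franklin--Ng $X$ is Turing complete.

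For the converse, the plan is to start from a reduction $\Gamma^X=\zj$ and build the complement $U$ of a $\Pi^0_1$ class $P$ directly, carving measure out near $X$ at well-chosen scales. The safe-for-$X$ mechanism is to enumerate a cylinder $[\sigma]$ into $U$ only once we discover some $x$ with $\Gamma^\sigma(x)\downarrow=0$ that is later refuted by $x$ entering $\zj$; since $\Gamma^X=\zj$ returns the true values, no prefix of $X$ is ever enumerated, so $X\in P$. The aim is then to show that this carving can be driven to remove a $(1-\delta)$ fraction of some cylinder $[X\until m]$ for every $\delta>0$, forcing $d(X\until m,P)<\delta$ infinitely often. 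The idea is to play the instability of the $\zj$-approximation against the fixed use of $\Gamma$: the extensions of $X\until m$ that \emph{wrongly guess} the status of a late-entering $x$ are exactly the extensions we are entitled to delete, and the goal is to arrange that infinitely many such deletion opportunities occur at shallow enough scales.

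The main obstacle is precisely this last measure-control step in the converse: bounding the depth at which the carving acts so that the absolute amount of deleted measure translates into relative density close to $1$ near $X$. Merely invoking Franklin--Ng to obtain a difference test $(A_k),B$ failed by $X$ and setting $P=2^\omega\setminus B$ is \emph{not} enough, because the shortest prefix $\tau\prec X$ with $[\tau]\subseteq A_k$ may have length growing as fast as $k$, in which case the only available bound $d(\tau,2^\omega\setminus B)\le 2^{|\tau|-k}$ is vacuous. I therefore expect the real work to lie in arranging the deletions at controlled depths---either by building a difference test of a suitably ``shallow'' form directly from $\Gamma$ and the modulus of $\zj$, or by spreading each deletion across a block of extensions whose length is tied to the target density $\delta$ rather than to $k$. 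This is the step where the full strength of $X\ge_T\zj$ is genuinely used.
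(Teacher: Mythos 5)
First, a point of context: the paper does not prove this theorem at all --- it is quoted from Bienvenu, H\"olzl, Miller and Nies \cite{BiHoMiNi:12} and used as a black box (the ``density zero implies complete'' direction at the end of the proof of Theorem~\ref{thm: notcuppable}, and the relativized converse in Corollary~\ref{cor: relative zero density}). So there is no in-paper proof to compare against, and your attempt has to be judged on its own terms. Your first direction is correct and complete: $d(\cdot,P)$ is upper semicomputable since $P$ is $\Pi^0_1$, the prefix-minimal strings with $d(\sigma,P)<2^{-k}$ form a prefix-free generator of $G_k$, the bound $\mu(G_k\cap P)\le 2^{-k}$ follows by summing over that prefix-free set, and $(G_k\cup U,\,U)$ is a difference test failed by $X$, so Franklin--Ng \cite{Fran_Ng_2011} yields completeness. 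This is essentially how that direction is derived in the cited literature.

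The converse --- completeness yields a $\Pi^0_1$ class in which $X$ has lower density zero --- is where the theorem's real content lies, and your proposal stops exactly at the hard step, as you yourself acknowledge. The gap is not merely technical: the passive carving mechanism you describe cannot work as stated. A computation $\Gamma^\sigma(x)\downarrow=0$ refuted by $x$ entering $\zj$ entitles you to delete only the cylinders that made that particular prediction, and nothing about $\Gamma^X=\zj$ forces a $(1-\delta)$-fraction of any $[X\until m]$ to have committed to refutable predictions: the approximation to $\zj$ may settle early on precisely those inputs where the measure of convergent computations concentrates, so ``playing the instability of the $\zj$-approximation'' gives no lower bound whatsoever on the deleted density. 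The missing idea is \emph{active} control of $\zj$: by the recursion theorem one builds a c.e.\ set $W$ whose membership is coded into $\zj$ at locations known in advance, so the construction can wait until a large fraction of the measure above a candidate $\tau$ has converged with output $0$ at a designated coding location $x$, then enumerate $x$ into $W$, thereby refuting and deleting that fraction in one blow, while $X$ (which computes $\zj$ correctly) necessarily predicted $1$ and survives. Without some device for manufacturing $\zj$-changes on demand, there is no way to drive the density in $P$ below $\delta$ along $X$; with it, one still has to manage the mass on which $\Gamma(x)$ diverges and run the strategy for every $\delta$ and every relevant prefix simultaneously, which is genuine bookkeeping. Your diagnosis of why the naive $P=2^\omega\setminus B$ from a Franklin--Ng test fails (depth of the capturing prefix growing with $k$) is accurate and shows you located the obstruction, but locating it is not the same as removing it: as submitted, half of the theorem is unproved.
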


The utility of Theorem~\ref{thm: zero density} is that if a Martin-L\"of random set is complete, then there is a single $\Pi^0_1$ class that witnesses this fact. The following corollary is a direct relativization of one direction of this theorem.

\begin{corollary}
\label{cor: relative zero density}
Fix $A$. If $X$ is a set Martin-L\"of random relative to $A$ and $A\oplus X \ge_T A'$, then there exists a $\Pi^0_1(A)$ class $P$ such that $X \in P$ and $X$ has lower density zero in $P$.
\end{corollary}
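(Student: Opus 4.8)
The plan is to obtain the corollary as a straightforward relativization of the forward direction of Theorem~\ref{thm: zero density}. Recall that in the unrelativized statement, declaring $X$ \emph{complete} means $X \geq_T \zj$; thus that direction asserts that any Martin-L\"of random $X$ computing the halting problem lies in a $\Pi^0_1$ class in which it has lower density zero. Every ingredient of this statement admits the standard relativization to an arbitrary oracle $A$: ``Martin-L\"of random'' becomes ``Martin-L\"of random relative to $A$'', the halting problem $\zj$ becomes the relativized jump $A'$, a reduction performed by $X$ becomes a reduction performed by $A\oplus X$, and ``$\Pi^0_1$ class'' becomes ``$\Pi^0_1(A)$ class''. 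Under this dictionary, the hypothesis ``$X \geq_T \zj$'' relativizes precisely to ``$A\oplus X \geq_T A'$'', so the relativized forward direction reads verbatim as the corollary.

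The substance of the proof is therefore to confirm that the argument establishing the forward direction of Theorem~\ref{thm: zero density} is uniform in the oracle, so that attaching $A$ throughout yields a valid proof. Two points deserve attention. First, the density function $d(\tau,P)$ and the notion of lower density zero are defined purely in terms of Lebesgue measure; these are absolute and are unaffected by relativization. Only the \emph{complexity} of the witnessing class $P$ changes, from $\Pi^0_1$ to $\Pi^0_1(A)$, reflecting that the construction of $P$ now has access to the oracle $A$. Second, the completeness hypothesis enters the original proof through a reduction computing $\zj$ from $X$; relativizing, the corresponding reduction computes $A'$ from $A\oplus X$, which is exactly the hypothesis $A\oplus X \geq_T A'$ supplied to us.

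The main (and essentially only) thing to check is that no step of the forward direction relies on a nonrelativizable feature of $\zj$ or of the lightface $\Pi^0_1$ classes, in particular that the passage from the completeness witness to the class $P$ is effective and uniform, so that it produces a genuine $\Pi^0_1(A)$ class rather than a class of higher complexity over $A$. Since the construction underlying Theorem~\ref{thm: zero density} is an effective one that takes the completeness witness as data, this uniformity is routine. Once it is verified, the forward implication of the relativized theorem holds, and as we require only that direction, the corollary follows at once.
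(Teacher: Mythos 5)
Your proposal is correct and matches the paper's treatment: the paper simply observes that the corollary is ``a direct relativization of one direction'' of Theorem~\ref{thm: zero density}, which is precisely the argument you spell out. Your additional remarks on uniformity and on which ingredients relativize are accurate elaborations of that same one-line justification.
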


We need one more result in order to prove Theorem~\ref{thm: notcuppable}. Let $S$ be a set of finite strings. We call $S$ \textit{bounded} if 
\[\sum_{\sigma \in S} 2^{-|\sigma|} < \infty.\]
The following result is implicit in Miller, Kjos-Hanssen and Solomon \cite{Kjos_Mill_Reed_toappear}, but it was first explicitly stated, in an even stronger form, by Simpson \cite[Lemma~5.11]{Simp_2007b}.

\begin{theorem}
\label{thm: covers}
Let $A$ be a $K$-trivial set and let $W_A$ be a bounded set of strings that is c.e.\ in \ $A$, then there exists a bounded c.e.\ set of strings $W$ such that $W_A \subseteq W$.
\end{theorem}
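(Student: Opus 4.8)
The plan is to exploit the characterization of the $K$-trivial sets as exactly the sets that are \emph{low for $K$}: since $A$ is $K$-trivial, there is a constant $c$ with $K(\sigma) \le K^A(\sigma) + c$ for every string $\sigma$. The central difficulty is that I cannot enumerate $W_A$ without the oracle $A$; the idea is therefore to bound $W_A$ inside a complexity sublevel set $W = \{\sigma : K(\sigma) \le |\sigma| + b\}$ for a suitable constant $b$. Such a set is plainly c.e.\ because $K$ is upper semicomputable, and the whole argument amounts to choosing $b$ large enough that $W_A \subseteq W$ while keeping $W$ bounded.

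First I would use boundedness of $W_A$ to fix $d$ with $\sum_{\sigma \in W_A} 2^{-|\sigma|} \le 2^d$. Then the requests $(|\sigma| + d, \sigma)$ for $\sigma \in W_A$ form a set that is c.e.\ in $A$ and has total weight $\sum_{\sigma \in W_A} 2^{-(|\sigma|+d)} = 2^{-d}\sum_{\sigma \in W_A} 2^{-|\sigma|} \le 1$. By the relativized Kraft--Chaitin theorem this yields an $A$-relative prefix-free machine witnessing $K^A(\sigma) \le |\sigma| + d + O(1)$ for every $\sigma \in W_A$. Applying low for $K$ then gives $K(\sigma) \le K^A(\sigma) + c \le |\sigma| + b$ for all $\sigma \in W_A$, where $b$ collects $d$, the coding constant of the machine, and $c$.

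With $b$ fixed, I define $W = \{\sigma : K(\sigma) \le |\sigma| + b\}$. This set is c.e., since we may enumerate $\sigma$ into $W$ as soon as some stage-$s$ approximation $K_s(\sigma)$ drops to $|\sigma| + b$, and the previous paragraph shows $W_A \subseteq W$. Finally, boundedness of $W$ follows from the Kraft inequality for prefix-free complexity: for $\sigma \in W$ we have $2^{-|\sigma|} \le 2^{b}\,2^{-K(\sigma)}$, whence $\sum_{\sigma \in W} 2^{-|\sigma|} \le 2^{b}\sum_{\sigma} 2^{-K(\sigma)} \le 2^{b}$, using $\sum_\sigma 2^{-K(\sigma)} \le 1$. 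I expect the conceptual heart of the argument, rather than any calculation, to be precisely this passage from an object that is merely $A$-c.e.\ to a genuinely c.e.\ one: direct enumeration is impossible, so I replace $W_A$ by a complexity sublevel set, which is enumerable without the oracle and stays bounded exactly because low for $K$ controls the unrelativized complexity $K$ on all of $W_A$.
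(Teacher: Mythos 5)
Your proof is correct and takes essentially the approach behind the result as the paper presents it: the paper gives no proof of Theorem~\ref{thm: covers} at all, citing Simpson's Lemma~5.11, whose argument likewise converts the bounded $A$-c.e.\ set into a Kraft--Chaitin request set to get $K^A(\sigma)\le|\sigma|+O(1)$ on $W_A$, invokes lowness for $K$, and takes the c.e.\ sublevel set $\{\sigma : K(\sigma)\le|\sigma|+b\}$, bounded by Kraft's inequality. The one ingredient worth flagging explicitly is that the equivalence of $K$-triviality with lowness for $K$ is itself a nontrivial theorem of Nies, but it is standard and entirely legitimate to use as a black box here.
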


\begin{proof}[Proof of Theorem~\ref{thm: notcuppable}]
Let $A$ be a $K$-trivial set. Let $R$ be a Martin-L\"of random set such that $A\oplus R \ge_T \emptyset'$. Nies showed that all $K$-trivial sets are low, and thus $A\oplus R \ge_T A'$ \cite{Nies_2005}. Further, because all $K$-trivial sets are low for Martin-L\"of randomness, $R$ is Martin-L\"of random relative to $A$. Hence from Corollary~\ref{cor: relative zero density}, there exists a $\Pi^0_1(A)$ class $P_A$ such that $R\in P_A$ and $R$ has lower density zero in $P_A$. 

Let $W_A$ be an $A$-c.e.\ prefix-free set such that $P_A= \{X \in 2^\omega \colon (\forall \sigma \in W_A)\;\sigma \not \prec X\}$. Any prefix-free set has bounded weight. Hence by Theorem~\ref{thm: covers}, there is a c.e.\ set $W$ such that $W_A \subseteq W$ and $W$ has bounded weight. 

The fact that $W$ has bounded weight means that $W$ is a Solovay test. This implies that there are finitely many (perhaps zero) initial segments of $R$ in $W$. No initial segment of $R$ is in $W_A$, so we may remove them from $W$, preserving the fact that it is a bounded weight c.e.\ superset of $W_A$. Let $P= \{X \in 2^\omega \colon (\forall \sigma \in W)\;\sigma \not \prec X\}$. Observe that $R \in P$ and $P\subseteq P_A$.
As $R$ has lower density zero in $P_A$, it follows immediately that $R$ has lower density zero in $P$. So by Theorem~\ref{thm: zero density}, $R$ is complete. 
\end{proof}

\section{Constructing ML-cupping partners}

We will now show that for any set $A$ that is not $K$-trivial, there is an incomplete Martin-L\"of random set $R$ such that $A \oplus R \ge_T \zj$. Further, we will show that if $A$ is computable from $\zj$ then such an $R$ can found that is low. This means that any set that is not $K$-trivial is weakly ML-cuppable, and any set below $\zj$ that is not $K$-trivial is ML-cuppable.

The previous results in this direction have used Martin-L\"of random sets that are also Martin-L\"of random relative to $A$. The following was noted in Nies \cite{Nies_2007}. Assume that $A\leq_T\zj$ is not $K$-trivial. Let $\Omega^A$ be Chaitin's $\Omega$ relativized to $A$ (see \cite{DHMN_2005}). It follows from Theorem~\ref{thm:K-trivial} that $\Omega^A\ngeq_T A$, otherwise $A$ would be $K$-trivial. Hence $\Omega^A\ngeq_T \zj$. Relativizing the fact that $\Omega\equiv_T\zj$ gives us $A\oplus \Omega^A \equiv_T A'\geq_T\zj$, so $A$ is weakly ML-cuppable. Further, if $A$ is low, then $\Omega^A\leq_T A'\leq_T\zj$, so $A$ is ML-cuppable.

Our approach to finding a cupping partner for $A$ is different. We will construct a Martin-L\"of random set $R$ that is \emph{not} Martin-L\"of random relative to $A$. We will control the places where $R$ enters the levels of the universal Martin-L\"of test relative to $A$, and use this to compute a function from $A\oplus R$ that dominates the settling time function of $\zj$.

The set $R$ will be constructed using a $\zj$ oracle. Control over $R$ will be maintained by keeping $R$ inside a sequence of $\Pi^0_1$ classes. During the construction, we need to keep $R$ inside a $\Pi^0_1$ class while removing it from some $\Pi^0_1(A)$ class that contains only $A$-random sets (sets Martin-L\"of random relative to $A$). To achieve this, we use the fact that $A$ is not a $K$-trivial set, hence not low for Martin-L\"of randomness.

The following lemma is an immediate corollary of a theorem of Ku\v{c}era, who proved that for any Martin-L\"of random set $X$ and any $\Pi^0_1$ class $P$ of positive measure, there is a tail of $X$ in $P$ \cite{Kuce_1985}.
\begin{lemma}
Let $P$ be any $\Pi^0_1$ class of positive measure and let $Q$ be any $\Pi^0_1(A)$ class that only contains $A$-random sets. If $P \subseteq Q$ then $A$ is $K$-trivial.
\end{lemma}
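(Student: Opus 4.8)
The plan is to deduce that $A$ is low for Martin-L\"of randomness and then invoke Theorem~\ref{thm:K-trivial}, which makes lowness for randomness equivalent to $K$-triviality. So I would fix an \emph{arbitrary} Martin-L\"of random set $Z$ and aim to show that $Z$ is in fact Martin-L\"of random relative to $A$; since $Z$ is arbitrary, this establishes exactly that $A$ is low for Martin-L\"of randomness.

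To locate a witness inside the classes at hand, I would apply Ku\v{c}era's theorem to $Z$ together with the plain $\Pi^0_1$ class $P$. As $P$ has positive measure, some tail of $Z$ lies in $P$; that is, writing $Z = \sigma Y$ for a suitable finite string $\sigma$, the tail $Y$ satisfies $Y \in P$. Because $P \subseteq Q$ and every member of $Q$ is $A$-random by hypothesis, the tail $Y$ is Martin-L\"of random relative to $A$.

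It then remains to transfer randomness from the tail $Y$ back to $Z$. Here I would use the standard fact that relative Martin-L\"of randomness is invariant under prepending or deleting a fixed finite string: for every $n$, the string $Z\until(|\sigma|+n)$ is obtained from $Y\until n$ by prefixing the fixed string $\sigma$, so $K^A(Z\until(|\sigma|+n))$ and $K^A(Y\until n)$ differ by at most a constant depending only on $\sigma$. Hence $Y$ being $A$-random forces $Z$ to be $A$-random as well. As $Z$ ranged over all Martin-L\"of random sets, $A$ is low for Martin-L\"of randomness, and therefore $K$-trivial by Theorem~\ref{thm:K-trivial}.

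There is no serious obstacle here, as the statement really is an immediate corollary of Ku\v{c}era's theorem; the only point that warrants a moment's care is the tail-invariance used in the last step. One must confirm that shifting by a finite prefix alters prefix-free complexity relative to $A$ only up to an additive constant, so that the membership of a \emph{single} tail in $Q$ already certifies the randomness of the original set. The freedom to test with an arbitrary $Z$ is precisely what lets this local fact (one tail in one positive-measure $\Pi^0_1$ class contained in $Q$) upgrade into the global conclusion that $A$ is low for randomness.
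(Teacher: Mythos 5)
Your proposal is correct and follows essentially the same route as the paper's proof: apply Ku\v{c}era's theorem to an arbitrary Martin-L\"of random $Z$ to place a tail in $P\subseteq Q$, conclude the tail (hence $Z$ itself) is $A$-random, and invoke Theorem~\ref{thm:K-trivial}. You simply spell out the tail-invariance of relative randomness and the final appeal to lowness for randomness, which the paper leaves implicit.
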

\begin{proof}
If $X$ is a Martin-L\"of random set, then some tail of $X$ is in $P$. This implies that some tail of $X$ is in $Q$, and hence that $X$ is $A$-random.
\end{proof}

\begin{lemma}
\label{lem: extension}
Let $P$ be a $\Pi^0_1$ class, $\tau \in 2^{<\omega}$, and $A$ a set that is not $K$-trivial. Let $Q$ be any $\Pi^0_1(A)$ class that only contains $A$-random sets. 
If $d(\tau, P) \ge \delta$, then 
there exists $\rho \succ \tau$ such that
\begin{enumerate}
\item $\rho \not \in Q$. 
\item $d(\rho, P) \ge \delta/2$.
\item For all $\sigma\prec \rho$, there is an $s\in\omega$ with $\sigma\in Q[s]$ and $\rho\notin Q[s]$.
\end{enumerate}
\end{lemma}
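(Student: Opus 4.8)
The plan is to reduce all three conditions to the existence of a single point of a carefully pruned subclass of $P$ lying outside $Q$, and then to produce that point using the previous lemma.

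First I would recast condition (iii) in terms of a fixed $A$-c.e.\ presentation of $Q$, under which $Q^c$ is the union of the cylinders $[\nu]$ over the enumerated strings $\nu$, and I would arrange (by choosing the presentation appropriately, which does not change $Q$ as a set) that no enumerated string is $\preceq\tau$. For a string $\rho$, read $\rho\notin Q[s]$ as ``some prefix of $\rho$ has been enumerated by stage $s$''. If $X\notin Q$ and $\rho\prec X$ is the \emph{shortest} enumerated prefix of $X$, then every proper prefix of $\rho$ is a prefix of $X$ shorter than $\rho$, hence never enumerated; so each $\sigma\prec\rho$ satisfies $\sigma\in Q[s]$ for all $s$, while $\rho\notin Q[s]$ from the stage $\rho$ is enumerated onward. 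Taking $\rho$ to be such a shortest exit string therefore yields (i) (since $[\rho]\subseteq Q^c$) and (iii) at once, and forces $\rho\succ\tau$ because no enumerated string is $\preceq\tau$. Thus the whole problem reduces to producing a point $X\in P$ with $\tau\prec X$, $X\notin Q$, and whose shortest exit string $\rho$ has $d(\rho,P)\ge\delta/2$.

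To control the density at the exit, I would prune $P$ to the class
\[
P'=\{X\in P : \tau\preceq X \text{ and } (\forall n\ge|\tau|)\; d(X\until n,P)\ge\delta/2\}.
\]
This is $\Pi^0_1$: the measure $\mu([\sigma]\cap P)$ is approximable from above, so ``$d(\sigma,P)\ge\delta/2$'' is co-c.e.\ in $\sigma$, and $P'$ is the intersection of $P$ with $[\tau]$ and with the $\Pi^0_1$ sets fixing $d(X\until n,P)\ge\delta/2$ for each $n\ge|\tau|$. The quantitative heart is that $P'$ keeps at least half the mass of $P$ above $\tau$: a point leaves $P'$ exactly when it passes through a string $\sigma\succeq\tau$ minimal with $d(\sigma,P)<\delta/2$, and these minimal strings are prefix-free, so the mass lost is $\sum_\sigma d(\sigma,P)2^{-|\sigma|}<\tfrac{\delta}{2}\sum_\sigma 2^{-|\sigma|}\le \tfrac{\delta}{2}2^{-|\tau|}$. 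Since $\mu([\tau]\cap P)\ge\delta\,2^{-|\tau|}$, this gives $\mu(P')\ge \tfrac{\delta}{2}2^{-|\tau|}>0$.

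Finally I would apply the previous lemma. As $P'$ is a $\Pi^0_1$ class of positive measure, $Q$ is a $\Pi^0_1(A)$ class of $A$-random sets, and $A$ is not $K$-trivial, the lemma gives $P'\not\subseteq Q$; fix $X\in P'$ with $X\notin Q$ and let $\rho\succ\tau$ be its shortest exit string. Then (i) and (iii) hold by the first step, and (ii) holds because $\rho=X\until m$ for some $m>|\tau|$ and membership of $X$ in $P'$ gives $d(\rho,P)=d(X\until m,P)\ge\delta/2$. I expect the main obstacle to be calibrating the pruning: one must cut aggressively enough that the density is forced to be at least $\delta/2$ at the very first exit from $Q$, yet gently enough—losing at most half the mass—that $P'$ still has positive measure and the previous lemma can be brought to bear.
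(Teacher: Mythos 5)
Your proposal is correct and follows essentially the same route as the paper: both prune $P$ to the $\Pi^0_1$ subclass of points all of whose prefixes beyond $\tau$ retain density at least $\delta/2$, show by a prefix-free mass count that this subclass keeps measure at least $\tfrac{\delta}{2}2^{-|\tau|}>0$, invoke the preceding lemma to find a point of it outside $Q$, and take $\rho$ to be the shortest prefix that has left $Q$. Your verification of condition (iii) via the shortest enumerated exit string is in fact slightly more explicit than the paper's.
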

\begin{proof}
Define the following $\Pi^0_1$ class:
\[\hat{P} = \{ X \in 2^\omega \colon X \succeq \tau \wedge (\forall \sigma)(\tau \preceq \sigma \prec X \rightarrow d(\sigma, P) \ge \delta/2)\}.\] 
If there is an $X\in \hat{P}\setminus Q$, then we can take $\rho\prec X$ to be the shortest extension of $\tau$ that is not in $Q$. So assume that $\hat{P} \subseteq Q$. If $\hat{P}$ has positive measure, we can apply the previous lemma to obtain a contradiction to the fact that $A$ is not $K$-trivial.

Let $S$ be a prefix-free set of strings that defines the complement of $\hat{P}$ above $\tau$. 
If $\sigma \in S$, then the measure of the complement of $P$ above $\sigma$ is at least $2^{-|\sigma|}(1- \delta/2)$. Hence,
\[\mu S \cdot (1-\delta /2) \le \mu ([\tau] \setminus P) \le (1-\delta)\cdot 2^{-|\tau|}.\]
This implies that the measure of $S$ is strictly less than $2^{-|\tau|}$, so $\hat{P}$ has positive measure.
\end{proof}

\begin{theorem}
\label{thm: cupping}
Let $A <_T \zj$ be a set that is not $K$-trivial. Let $D \le_T \zj$ be a non-computable set. There exists a Martin-L\"of random set $R$ such that
\begin{enumerate}
\item $R \not \ge_T D$.
\item $R$ is low.
\item $A \oplus R \equiv_T \zj$.
\end{enumerate}
 \end{theorem}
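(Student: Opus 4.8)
The plan is to construct $R$ by a $\zj$-oracle construction that simultaneously meets three families of requirements: a randomness requirement ensuring $R$ is Martin-Löf random (achieved by keeping $R$ inside a suitable $\Pi^0_1$ class of measure close to $1$, i.e., the complement of a level of the universal ML-test), a lowness/incompleteness strategy, and a coding strategy that encodes $\zj$ into $A\oplus R$. The central idea, as foreshadowed in the text, is to control precisely where $R$ enters the levels of the universal ML-test relative to $A$: by recording at which stage $R$ crosses into or out of the $\Pi^0_1(A)$ classes $Q$, we want to read off from $A\oplus R$ a function dominating the settling-time function of $\zj$, giving $A\oplus R\ge_T\zj$. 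Conversely $R\le_T\zj$ because the entire construction runs with a $\zj$ oracle, and since $A<_T\zj$ we get $A\oplus R\le_T\zj$, hence $A\oplus R\equiv_T\zj$ for clause (iii).

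The engine driving the coding is Lemma~\ref{lem: extension}. At each coding step I would have a current initial segment $\tau$ of $R$ lying in a $\Pi^0_1$ class $P$ with $d(\tau,P)$ bounded below by some $\delta$, together with a $\Pi^0_1(A)$ class $Q$ of $A$-random sets that I wish to escape. The lemma produces an extension $\rho\succ\tau$ with $\rho\notin Q$ while $d(\rho,P)\ge\delta/2$, and crucially clause~(iii) of the lemma guarantees that every proper initial segment $\sigma$ of $\rho$ temporarily appears inside an approximation $Q[s]$ and is later ejected. This oscillation is exactly the mechanism for coding: the stage numbers $s$ witnessing the ejections encode computational information, and because $Q$ is a $\Pi^0_1(A)$ class of $A$-random sets, the ``cost'' in measure is controlled so that the density only halves, letting me iterate infinitely often while keeping $\sum$ of density losses convergent and hence $R$ inside a positive-measure (indeed measure-$\to 1$) $\Pi^0_1$ class, securing randomness. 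I would set $\delta$ large at the outset (say by first entering a level of the universal test where the ambient $\Pi^0_1$ class has density near $1$) and renormalize as needed so the geometric halving never drives $R$ out of the randomness region.

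For the lowness requirement (ii) I would build $R$ to be low by a standard forcing-the-jump argument embedded in the $\zj$ construction: at each stage decide, using the $\zj$ oracle, whether the current finite condition has an extension (still inside the relevant $\Pi^0_1$ class) forcing $e\in R'$, and if so take it, thereby making $R'\le_T\zj$; since $R\le_T\zj$ as well this gives $R'\le_T\zj\equiv_T\emptyset'$, i.e. $R$ low. The requirement $R\not\ge_T D$ in clause~(i) is met by a diagonalization against all functionals $\Phi_e$: at the $e$-th such step I use the freedom of choosing among several incompatible extensions $\rho$ (the lemma does not pin down a unique one) to find two conditions on which $\Phi_e$ either disagrees or one diverges, so that $\Phi_e^R\ne D$; because $D$ is non-computable this diagonalization can always be satisfied while respecting the $\Pi^0_1$ constraints. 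All three strategy types must be interleaved on a priority list, and the main obstacle I anticipate is the tension between the coding strategy — which demands the specific oscillating extensions from Lemma~\ref{lem: extension} and thereby commits $R$ to pass through prescribed regions — and the diagonalization and lowness strategies, which need freedom to choose among extensions; the delicate bookkeeping is to verify that Lemma~\ref{lem: extension} still supplies enough room (enough incompatible valid $\rho$'s, each with adequate residual density) after the higher-priority requirements have restricted the available extensions, and that the accumulated density losses across all requirements remain summable so that $R$ is genuinely Martin-Löf random.
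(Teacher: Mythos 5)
Your architecture matches the paper's: a $\zj$-oracle construction of a sequence of strings $\tau_s$ and positive-density $\Pi^0_1$ classes $P_s$, with Lemma~\ref{lem: extension} used to push $R$ out of levels $Q_{l(s)}(A)$ of the universal $A$-ML test, diagonalization for $R\not\ge_T D$, and jump-forcing for lowness. But the central step --- the one that actually yields $A\oplus R\ge_T\zj$ --- is missing. You say the stages at which initial segments of $R$ are ejected from the classes $Q$ ``encode computational information'' and that you ``want to read off'' a function dominating the settling-time function $m$ of $\zj$, but you never say why those ejection stages are large. As described, nothing prevents the witness $\rho$ from Lemma~\ref{lem: extension} from having already left $Q$ at stage $5$, in which case the ejection stages carry no information about $\zj$ whatsoever. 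The paper's mechanism is: before extending at stage $s+1$, intersect the current class with the stage-$m(s)$ \emph{approximation} of $Q_{l(s)}(A)$, i.e., work inside $\hat P_s = P_s\cap Q_{l(s)}(A)[m(s)]$ (this costs only $2^{-4s-2}$ in density since $\mu(\overline{Q_{l(s)}(A)})\le 2^{-l(s)}$ with $l(s)=|\tau_s|+4s+2$), and then take $\tau_{s+1}$ to be the least extension of $\tau_s$ leaving $Q_{l(s)}(A)$ at the least possible stage $t_s$. Because $R\in\hat P_s\subseteq Q_{l(s)}(A)[m(s)]$, necessarily $t_s>m(s)$; and because $\tau_{s+1}$ is chosen canonically, $A\oplus R$ can recover $t_s$ and hence dominate $m$. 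Without the restriction to $Q_{l(s)}(A)[m(s)]$ there is no link between the ejection stages and the settling time of $\zj$, and clause (iii) of the theorem is not established.

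Two smaller points. Your diagonalization and jump-forcing are phrased as Cohen-style extension arguments (``find two conditions on which $\Phi_e$ disagrees,'' ``take an extension forcing $e\in R'$''); inside a positive-density $\Pi^0_1$ class one must argue by measure instead: for $R\not\ge_T D$ use Sacks's theorem that $\mu\{X:\Gamma_e^X\succ D\until n\}\to 0$ as $n\to\infty$ and delete that set, and for lowness ask whether $\{X\in\hat P_s:\Gamma_e^X(e)\uparrow\}$ retains a fixed fraction of the density, keeping it if so and otherwise restricting to a clopen approximation of the convergent side. Also, there is no priority tension to manage and no need for summable density losses: the densities $d(\tau_s,P_s)$ may tend to $0$ (the paper uses the lower bound $f(s)=2^{-4s-1}$), and $R$ is Martin-L\"of random simply because $R\in P_0$, the complement of the first level of the universal test.
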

\begin{proof}
We will construct $R$ using a $\zj$ oracle. We will build a sequence $\{( \tau_s, P_s)\}_{s\in\omega}$ where $\tau_s$ is a finite string and $P_s$ is a $\Pi^0_1$ class. For all $s$, we will ensure that $\tau_{s+1} \succeq \tau_s$ and $P_{s+1} \subseteq P_s$. We will take $R = \bigcup_s \tau_s$, and further ensure that 
$R\in \bigcap_s P_s$. 
For all $n$, let $Q_n(A)$ be the complement of the $n$th level of the universal Martin-L\"of test relative to~$A$.

The idea behind this proof is to construct an $R$ that is not Martin-L\"of random relative to $A$. We can then use the stage that $R$ leaves $Q_{l(s)}(A)$ ($l$ will be a function computable in $A\oplus R$) to compute a function that dominates the settling time function of $\zj$. Let $m$ be the settling time function for $\zj$. The requirement that $R \not \ge_T D$ will be achieved at odd stages of the construction and the requirement that $R$ is low will be achieved at even stages in the construction. In particular we will ensure that:
\begin{enumerate}
\item If $s$ is odd and $e= (s-1)/2$, then $X \in [\tau_{s}] \cap P_{s} \rightarrow \Gamma_{e}^X \ne D$.
\item If $s>0$ is even and $e=s/2$, then either $( X \in [\tau_s] \cap P_{s} \rightarrow \Gamma_{e}^X(e) \uparrow)$ or 
$( X \in [\tau_{s}] \cap P_{s} \rightarrow \Gamma_e^X(e) \downarrow)$.
\end{enumerate}

Define $f:\omega \rightarrow \omega$ by $f(s) =2^{-4s-1}$. The function $f$ will be a lower bound for $d(\tau_s, P_s)$.
At stage $0$, let $\tau_0 =\lambda$ and let $P_0$ be the complement of the first level of the universal Martin-L\"of test.

At stage $s+1$, assume that $d(\tau_s, P_s) \ge f(s)$.
This clearly holds for the case that $s=0$.
Define $l(s) = |\tau_{s}| + 4s+2$. Observe that:
\begin{align*}
d(\tau_{s}, P_{s} \cap Q_{l(s)}(A)) &\ge d(\tau_{s} ,P_{s}) - 2^{|\tau_{s}|} \mu( \overline{Q_{l(s)}(A)})\\
&\ge f(s) - 2^{-4s-2} \\
&= {f(s)}/{2}.
\end{align*}

Hence if we let $\hat{P}_s = P_s \cap Q_{l(s)}(A)[m(s)]$, then the above inequality establishes that $d(\tau_s, \hat{P}_s) \ge f(s)/2$.
Let $t \in \omega$ be the least number such that there exists $\rho \succ \tau_s$ with the following properties: 
\begin{enumerate}
\item $\rho \not \in Q_{l(s)}(A)[t]$.
\item $d(\rho, \hat{P}_s) \ge f(s)/4$.
\end{enumerate}
Such a $t$ and $\rho$ exist by Lemma~\ref{lem: extension}. Let $\tau_{s+1}$ be the least $\rho$ for which the above holds for this particular $t$.

Now we will define $P_{s+1}$ such that $d(\tau_{s+1}, P_{s+1}) \ge f(s)/16$.
The definition of $P_{s+1}$ depends on the requirement being met.
If $s+1$ is odd, then let $e =s/2$. Using a $\zj$ oracle, we can find a number $n$ such that the measure of the set $\{X \in 2^\omega \colon \Gamma_e^X \succ D\until n\}$ is less than $2^{-|\tau_{s+1}|} \cdot f(s)/8$. The existence of such an $n$ follows from Sacks's theorem that the measure of the Turing cone above any non-computable set is zero \cite{Sacks_1963}. The class $P_{s+1}$ is defined to be the intersection of $\hat{P}_s$, and the class of sets that do not compute an extension $D\until n$ via $\Gamma_e$.

If $s+1$ is even, then let $e = (s+1)/2$. Using $\zj$, determine whether 
$d(\tau_{s+1}, \{X \in \hat{P}_s \colon \Gamma^X_e(e)\uparrow\}) \ge f(s)/8$. If so,
 set $P_{s+1} = \{X \in \hat{P}_s \colon \Gamma^X_e(e)\uparrow\}$.
Otherwise we have that 
$d(\tau_{s+1} , \{X \in \hat{P}_s \colon \Gamma^X_e(e)\downarrow\}) \ge f(s)/8$. Determine a number $n$
such that $d(\tau_{s+1} , \{X \in \hat{P}_s \colon \Gamma^X_e(e)\downarrow[n]\}) \ge f(s)/16$, and then set
$P_{s+1} = \{X \in \hat{P}_s \colon \Gamma^X_e(e)\downarrow[n]\}$.

Observe that $d(\tau_{s+1}, P_{s+1}) \ge f(s)/16 = f(s+1)$ hence our construction assumption holds for the following stage. This ends the construction. 

\emph{Verification.} The construction ensures that $R\not \ge_T D$ and $R' \le_T \zj$. We will show that 
$R\oplus A \equiv_T \zj$. Given $R$ and $A$, it possible to compute the sequence $\{\tau_s\}_{s\in\omega}$ and the function $l(s)$. Firstly, $l(s)$ can be computed from $|\tau_s|$. Secondly, given $l(s)$, we can compute the least stage $t_s$ such that for some $\tau \prec R$, $\tau \not \in Q_{l(s)}(A)[t_s]$. The string $\tau_{s+1}$ is the least such $\tau$ for this $t_s$. For all $s$, because $R \in Q_{l(s)}(A)[m(s)]$, $t_s$ is greater than $m(s)$, and thus $R\oplus A \ge_T \zj$.
\end{proof}

\begin{corollary}
If $A$ is a set below $\zj$, then $A$ is ML-cuppable if and only if $A$ is not $K$-trivial.
\end{corollary}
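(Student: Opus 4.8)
The plan is to obtain this corollary with no new construction, by combining Theorem~\ref{thm: notcuppable} and Theorem~\ref{thm: cupping}; throughout I read ``below $\zj$'' as $A \le_T \zj$. The forward direction I would dispatch first, since it is immediate. Suppose $A$ is ML-cuppable, witnessed by a Martin-L\"of random $X <_T \zj$ with $A \oplus X \ge_T \zj$. Because $X \le_T \zj$ and $X \not\equiv_T \zj$, we cannot have $X \ge_T \zj$, so $X$ is incomplete, and the very same $X$ witnesses that $A$ is \emph{weakly} ML-cuppable. Theorem~\ref{thm: notcuppable} says no $K$-trivial set is weakly ML-cuppable, so $A$ is not $K$-trivial.

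For the reverse direction, assume $A \le_T \zj$ is not $K$-trivial, and first handle the main case $A <_T \zj$. Here I would apply Theorem~\ref{thm: cupping}, taking for $D$ any fixed non-computable $\Delta^0_2$ set (for instance $D = \zj$ itself, which satisfies the hypotheses); condition~(1) on $D$ plays no role in the conclusion I need. The theorem then yields a low Martin-L\"of random $R$ with $A \oplus R \equiv_T \zj$. Since $R$ is low we have $R' \le_T \zj$, and if $R \equiv_T \zj$ then $R' \equiv_T \zj' >_T \zj$, a contradiction; hence $R <_T \zj$. Thus $R$ is a Martin-L\"of random set strictly below $\zj$ that cups $A$ above $\zj$, which is exactly ML-cuppability.

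The only point I expect to require care is the boundary case $A \equiv_T \zj$, since Theorem~\ref{thm: cupping} is stated for $A$ \emph{strictly} below $\zj$. In this case $A$ is automatically not $K$-trivial, and ML-cuppability is easy: for \emph{any} low Martin-L\"of random $R$ we have $A \oplus R \ge_T A \equiv_T \zj$, while the lowness argument above again gives $R <_T \zj$. Such an $R$ exists because low Martin-L\"of random sets exist — classically, or as a byproduct of Theorem~\ref{thm: cupping} applied to some non-$K$-trivial $B <_T \zj$ (e.g.\ a low non-$K$-trivial c.e.\ set). The entire argument is therefore a bookkeeping combination of the two theorems; the substantive content lives in Theorems~\ref{thm: notcuppable} and~\ref{thm: cupping}, and the hard part is merely keeping track of the implication ``ML-cuppable $\Rightarrow$ weakly ML-cuppable'' and the passage from lowness of $R$ to $R <_T \zj$.
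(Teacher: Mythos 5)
Your proposal is correct and is essentially the argument the paper intends: the corollary is stated without proof as an immediate combination of Theorem~\ref{thm: notcuppable} (for the forward direction, via ML-cuppable $\Rightarrow$ weakly ML-cuppable) and Theorem~\ref{thm: cupping} with any non-computable $D\le_T\zj$ (for the reverse, using lowness of $R$ to get $R<_T\zj$). Your explicit treatment of the boundary case $A\equiv_T\zj$, which Theorem~\ref{thm: cupping} does not literally cover, is a small but legitimate point of care that the paper leaves implicit.
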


If we remove the requirements that $A$ and $D$ are below $\zj$, then the construction is computable in $A\oplus D \oplus \zj$ and we get a Martin-L\"of random $R$ such that $R \not \ge_T D$ and 
$A \oplus \zj \le_T A \oplus R \le_T A \oplus D \oplus \zj$. Letting $D=\zj$, we obtain the following corollary.

\begin{corollary}
If $A\subseteq\omega$, then $A$ is weakly ML-cuppable if and only if $A$ is not $K$-trivial.
\end{corollary}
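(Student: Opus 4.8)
The corollary is a biconditional whose two directions invoke the paper's two main theorems, run in opposite directions; I would separate them, noting that the forward implication is essentially a contrapositive while the reverse implication carries all the content. For the forward direction, suppose $A$ is weakly ML-cuppable, so some incomplete Martin-L\"of random $X$ satisfies $A \oplus X \ge_T \zj$. If $A$ were $K$-trivial this would contradict Theorem~\ref{thm: notcuppable}, which says that no $K$-trivial set is weakly ML-cuppable; hence $A$ is not $K$-trivial, and nothing more is needed here.

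For the reverse direction, assume $A$ is not $K$-trivial; the goal is to produce an incomplete Martin-L\"of random $R$ with $A \oplus R \ge_T \zj$. I cannot apply Theorem~\ref{thm: cupping} verbatim, since it assumes $A <_T \zj$ whereas here $A$ is arbitrary. The plan is to run the same construction relative to the oracle $A \oplus D \oplus \zj$ in place of $\zj$: the computations of the test levels $Q_n(A)$, of the settling-time bound $m(s)$, and of the measure estimates used to diagonalize against $D$ remain oracle-computable, the density invariant $d(\tau_s, P_s) \ge f(s)$ is preserved, and $R$ is still genuinely Martin-L\"of random because $R \in P_0$, the complement of the first level of the \emph{unrelativized} universal test. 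As recorded just before the statement, this yields a Martin-L\"of random $R$ with $R \not\ge_T D$ and $A \oplus \zj \le_T A \oplus R \le_T A \oplus D \oplus \zj$. I would then take $D = \zj$: the clause $R \not\ge_T \zj$ says exactly that $R$ is incomplete, while $A \oplus D \oplus \zj \equiv_T A \oplus \zj$ collapses the Turing bounds to $A \oplus R \equiv_T A \oplus \zj \ge_T \zj$. Thus $R$ is an incomplete random cupping partner for $A$, witnessing weak ML-cuppability.

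The one step demanding genuine care is this relativization of Theorem~\ref{thm: cupping} to $A \oplus D \oplus \zj$: one must confirm that dropping the hypotheses $A, D \le_T \zj$ costs nothing beyond enlarging the oracle. In particular, the diagonalization ensuring $R \not\ge_T D$ must still succeed, its only real input being Sacks's theorem that the cone above a non-computable set is null (so that $D$ non-computable suffices), and $R$ must be verified random against the plain universal test rather than its $A$-relativized version. Once this relativization is in hand, the specialization $D = \zj$ is purely formal, since $A \oplus \zj \oplus \zj \equiv_T A \oplus \zj$.
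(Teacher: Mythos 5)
Your proposal is correct and follows the paper's own route exactly: the forward direction is the contrapositive of Theorem~\ref{thm: notcuppable}, and the reverse direction is precisely the paper's remark that dropping the hypotheses $A, D \le_T \zj$ from Theorem~\ref{thm: cupping} makes the construction computable in $A \oplus D \oplus \zj$, yielding $R \not\ge_T D$ and $A \oplus \zj \le_T A \oplus R \le_T A \oplus D \oplus \zj$, after which one sets $D = \zj$. Your additional checks (randomness against the unrelativized test via $P_0$, Sacks's theorem for the diagonalization) are exactly the points that make the relativization routine.
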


Using a slightly different construction, we can construct, for any non-$K$-trivial $A$ and non-computable $D$, a Martin-L\"of random $R\ngeq_T D$ such that $A\oplus R \equiv_T A\oplus D \oplus \zj$. In order to achieve this, we need a new technique to encode $D$ into $A\oplus R$ while ensuring that $R\ngeq_T D$. First we need to improve on Lemma~\ref{lem: extension}.

\begin{lemma}
\label{lem: seq}
Let $P$ be a $\Pi^0_1$ class, $\tau \in 2^{<\omega}$, and $A$ a set that is not $K$-trivial. Let $Q$ be any $\Pi^0_1(A)$ class that only contains $A$-random sets. 
If $d(\tau, P \cap Q) \ge \delta$, then 
there exists an $A\oplus \zj$ computable prefix-free sequence of strings $\la \rho_i \colon i \in \omega \ra$ such that for all $i$,
\begin{enumerate}
\item \label{c l1} $\rho_i \not \in Q$. 
\item \label{c l2} $d(\rho_i, P) \ge \delta/2$.
\item \label{c l3} For all $\sigma\prec \rho_i$, there is an $s\in\omega$ with $\sigma \in Q[s]$ and $\rho_i \not\in Q[s]$.
\end{enumerate}
\end{lemma}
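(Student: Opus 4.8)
The plan is to build the sequence $\la \rho_i : i \in \setN \ra$ by iterating Lemma~\ref{lem: extension} while carving the chosen strings out of the cylinder $[\tau]$, and the observation that makes the iteration run forever is a conservation principle: since each $\rho_i$ we produce satisfies $\rho_i \notin Q$, we have $[\rho_i] \cap Q = \emptyset$, so deleting $[\rho_i]$ removes no mass from $P \cap Q$. Concretely, suppose we have already found pairwise incomparable strings $\rho_0, \dots, \rho_{i-1} \succ \tau$, each with $\rho_j \notin Q$, and set $U_i = [\tau] \setminus \bigcup_{j<i} [\rho_j]$. Then $\mu(U_i \cap P \cap Q) = \mu([\tau] \cap P \cap Q) \ge \delta\, 2^{-|\tau|} \ge \delta\, \mu(U_i)$. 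Decomposing the clopen set $U_i$ into its maximal subcylinders $[\nu]$ (with $\nu \succeq \tau$) and averaging, some such $\nu$ satisfies $d(\nu, P \cap Q) \ge \delta$; in particular $d(\nu, P) \ge \delta$ and $[\nu] \cap Q \ne \emptyset$. Applying Lemma~\ref{lem: extension} with $\nu$ in place of $\tau$ then produces $\rho_i \succ \nu$ with $\rho_i \notin Q$, $d(\rho_i, P) \ge \delta/2$, and condition~(\ref{c l3}); since $[\nu] \subseteq U_i$, this $\rho_i$ is incomparable with all earlier $\rho_j$. This shows that a suitable $\rho_i$ always exists, so the process never stalls and the budget $\mu(U_i \cap P \cap Q) \ge \delta\, 2^{-|\tau|}$ is maintained at every stage.

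To obtain the sequence \emph{uniformly computably in} $A \oplus \zj$, I would separate this (non-effective) existence argument from the actual search, so that the algorithm never has to compare the measure of a $\Pi^0_1(A)$ class to a threshold---an operation that would require $A'$, which need not be below $A \oplus \zj$ when $A$ is not low. Having found $\rho_0, \dots, \rho_{i-1}$, the oracle $A \oplus \zj$ searches for the first pair $(\rho, s)$ such that $\rho \succeq \tau$ is incomparable with each $\rho_j$, $\rho \notin Q[s]$, every proper initial segment $\sigma \prec \rho$ satisfies $\sigma \in Q[s]$, and $d(\rho, P) \ge \delta/2$; it then sets $\rho_i = \rho$. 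Each clause is checkable: membership in the clopen approximation $Q[s]$ is decided by $A$, while $d(\rho, P) \ge \delta/2$ is a $\Pi^0_1$ statement about the plain $\Pi^0_1$ class $P$ and is decided by $\zj$. The single witnessing stage $s$ immediately yields condition~(\ref{c l3}), and because once some initial segment of $\rho$ is enumerated into the (c.e.\ in $A$) complement of $Q$ the cylinder $[\rho]$ is permanently removed, the relation $\rho \notin Q[s]$ already witnesses $\rho \notin Q$, so the conservation argument applies to whatever string the search returns.

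The termination of this search is exactly what the existence argument above supplies: taking the $\rho_i \succ \nu$ furnished by Lemma~\ref{lem: extension}, at the first stage $s_0$ at which $\rho_i$ leaves $Q$ all of its proper initial segments are still in $Q[s_0]$ (those below $\nu$ never leave, as $[\nu] \cap Q \ne \emptyset$, and those between $\nu$ and $\rho_i$ have not yet left, by minimality of $\rho_i$), so $(\rho_i, s_0)$ meets the search criteria. Hence the search halts, returns a string satisfying (\ref{c l1})--(\ref{c l3}) and incomparable with its predecessors, the budget is preserved, and the induction continues, producing an infinite prefix-free sequence.

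I expect the main obstacle to be precisely this effectivity bookkeeping rather than the combinatorial core. The conservation observation makes the existence of infinitely many suitable $\rho_i$ essentially free, but one must be careful that the search uses \emph{only} the plain $\Pi^0_1$ density of $P$ (decidable by $\zj$) and the $A$-enumeration of $Q$, and never a measure comparison for the $\Pi^0_1(A)$ class $P \cap Q$; the latter quantity appears solely in the termination proof, where it guarantees that a witness exists, not to locate one. A secondary point requiring care is the convention for ``$\sigma \in Q[s]$'', which must be set up (via the c.e.\ set of strings generating the complement of $Q$) so that a string leaving $Q$ before any of its proper initial segments do yields condition~(\ref{c l3}), and so that leaving $Q[s]$ is permanent.
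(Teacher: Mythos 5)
Your proposal is correct and follows essentially the same route as the paper: iterate Lemma~\ref{lem: extension}, excising the cylinders $[\rho_j]$ already chosen, and use the key observation that this excision does not change $P\cap Q$ because each $\rho_j\notin Q$ (the paper phrases this as $P_s\cap Q=P\cap Q$ for $P_s=P\setminus\bigcup_{j\le s}[\rho_j]$). The only differences are cosmetic: the paper applies the lemma above the original $\tau$ to the shrunken class and then post-processes to get prefix-freeness, whereas you relocate to a maximal subcylinder of the remaining clopen set to get incomparability directly, and you spell out the $A\oplus\zj$-effectivity that the paper leaves implicit.
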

\begin{proof}
It is sufficient to show that $A\oplus \zj$ can compute a sequence of strings $\la \rho_i \colon i \in \omega \ra$, each with the properties \eqref{c l1}, \eqref{c l2} and \eqref{c l3}, such that for all $i, j \in \omega$ if $i < j$ then $\rho_i \not \preceq \rho_j$. 
From such a sequence, an infinite prefix-free set can be formed by removing any string that is a proper initial segment of an earlier string in the sequence.

We define $\la \rho_i \colon i \in \omega \ra$ by induction. Let $\rho_0$ be the first string found with properties \eqref{c l1}, \eqref{c l2} and \eqref{c l3}. Such a string exists by Lemma~\ref{lem: extension}. Once $\rho_s$ has been defined, let $P_s = P\setminus\bigcup_{i \le s} [\rho_i]$. Observe that $P_s \cap Q = P \cap Q$, so we can again apply Lemma~\ref{lem: extension} to find another extension of $\tau$ with the desired properties. Let this extension be $\rho_{s+1}$.
\end{proof}

\begin{theorem}\label{thm: more cupping}
If $A$ is a set that is not $K$-trivial and $D$ is a set that is non-computable, then there exists a Martin-L\"of random set $R$ such that:
\begin{enumerate}
\item $A\oplus R \equiv_T A \oplus D \oplus \zj$.
\item $R \not \ge_T D$.
\item $R' \le_T A \oplus R$.
\end{enumerate}
\end{theorem}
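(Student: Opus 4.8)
The plan is to run the construction of Theorem~\ref{thm: cupping} almost verbatim, but to add a third family of stages devoted to coding $D$, and at those stages to replace the single extension produced by Lemma~\ref{lem: extension} with the $A\oplus\zj$-computable prefix-free sequence $\langle \rho_i\rangle$ of Lemma~\ref{lem: seq}, using the index of the chosen string to record a bit of $D$. Concretely, I would partition the positive stages into three computable infinite sets handling, respectively, the diagonalization $R\not\ge_T D$ (by the Sacks measure argument, exactly as at the odd stages of Theorem~\ref{thm: cupping}), the forcing of $\Gamma_e^R(e){\downarrow}$ or $\Gamma_e^R(e){\uparrow}$ (exactly as at the even stages), and the coding of $D$. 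I would keep the same nested data $\{(\tau_s,P_s)\}$ with $\tau_{s+1}\succeq\tau_s$ and $P_{s+1}\subseteq P_s$, the classes $Q_{l(s)}(A)$, the settling-time function $m$ of $\zj$, and the density invariant $d(\tau_s,P_s)\ge f(s)$, and set $R=\bigcup_s\tau_s$. Because $D$ is used only to select indices, and every other decision is made from $\zj$ together with the $A$-enumeration of the classes $Q_{l(s)}(A)$ and the $A\oplus\zj$-computable sequences of Lemma~\ref{lem: seq}, the whole construction runs in $A\oplus D\oplus\zj$; this gives $A\oplus R\le_T A\oplus D\oplus\zj$, one half of clause (i).

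At a coding stage $s+1$ responsible for the bit $D(k)$, I would apply Lemma~\ref{lem: seq} with $Q=Q_{l(s)}(A)$ and $P=\hat P_s$, so that the hypothesis becomes the density bound $d(\tau_s,\hat P_s\cap Q_{l(s)}(A))=d(\tau_s,P_s\cap Q_{l(s)}(A))\ge f(s)/2$ already verified in Theorem~\ref{thm: cupping}. This yields $\langle\rho_i\rangle$ with each $\rho_i\notin Q_{l(s)}(A)$ and $d(\rho_i,\hat P_s)\ge f(s)/4$, and I would set $\tau_{s+1}=\rho_{D(k)}$ and $P_{s+1}=\hat P_s$. The density invariant is preserved, since $d(\tau_{s+1},P_{s+1})\ge f(s)/4\ge f(s+1)$, and property~\eqref{c l2} is exactly what keeps the coding step from damaging the density bookkeeping on which the diagonalization and forcing steps depend. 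Because each $\rho_i$ produced by Lemma~\ref{lem: seq} is a first exit from $Q_{l(s)}(A)$ — its proper initial segments all lie in $Q_{l(s)}(A)$, which is what property~\eqref{c l3} records — the coded string $\rho_{D(k)}$ will be the shortest initial segment of $R$ outside $Q_{l(s)}(A)$.

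The reverse inequality $A\oplus D\oplus\zj\le_T A\oplus R$ I would obtain in two decoupled stages. First, working only from $A\oplus R$, I recover $\{\tau_s\}$ and $\{l(s)\}$ by the uniform rule that $\tau_{s+1}$ is the shortest $\tau\prec R$ with $\tau\notin Q_{l(s)}(A)$ (enumerating the complement of $Q_{l(s)}(A)$ from $A$); by the first-exit property this rule returns exactly the string chosen in the construction, at coding and non-coding stages alike. Letting $t_s$ be the stage at which this shortest $\tau$ is enumerated out, the inclusion $R\in Q_{l(s)}(A)[m(s)]$ forces $t_s>m(s)$, so $t_s$ dominates the settling time of $\zj$ and hence $\zj\le_T A\oplus R$, with no use of $\zj$ in the recovery itself. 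Second, now that $A\oplus\zj\le_T A\oplus R$, I recompute at each coding stage the $A\oplus\zj$-computable sequence $\langle\rho_i\rangle$ and search for the unique $\rho_i$ equal to the already-recovered $\tau_{s+1}$; its index is $D(k)$, so $D\le_T A\oplus R$. Clause (i) then holds with equality. Clause (iii) follows because $A\oplus R$ can now run the entire construction and read off at each forcing stage whether $\Gamma_e^R(e)$ was forced to converge, and clause (ii), $R\not\ge_T D$, is immediate from the diagonalization stages, since $R\in[\tau_{s+1}]\cap P_{s+1}$ for every such stage.

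The main obstacle is making the coding of $D$ coexist with the requirement $R\not\ge_T D$ while keeping both halves of the $\equiv_T$ honest: we are putting $D$ into $A\oplus R$ yet must forbid $R$ alone to compute it. The resolution is the decoupling above — the $\zj$-recovery is arranged to be purely $A\oplus R$-computable via the first-exit rule, so there is no circularity, whereas decoding the index that carries $D$ genuinely requires the sequence $\langle\rho_i\rangle$, which is only $A\oplus\zj$-computable and not $R$-computable; thus $A\oplus R$ decodes $D$ but $R$ does not, and the diagonalization stages actively keep $R$ off the cone above $D$. The delicate points to verify are precisely that property~\eqref{c l3} makes the first-exit recovery return the coded string unambiguously, and that property~\eqref{c l2} preserves the density invariant $d(\tau_{s+1},P_{s+1})\ge f(s+1)$ through the interleaved coding stages.
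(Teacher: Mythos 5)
There is a genuine gap in the decoding direction $D \le_T A\oplus R$, and it stems from keeping the Sacks-style diagonalization stages separate from the coding stages. To read $D$ off from $A\oplus R$ you must, at each coding stage $s+1$, recompute the sequence $\la\rho_i\ra$ of Lemma~\ref{lem: seq}; that sequence is defined relative to the class $\hat P_s$, so you need an index for $P_s$, hence for every $P_{s'+1}$ with $s'<s$. But at your diagonalization stages $P_{s'+1}$ is the intersection of $\hat P_{s'}$ with $\{X:\Gamma_e^X\not\succ D\until n\}$, where $n$ is produced by an unbounded $D\oplus\zj$-search; an index for this class requires the string $D\until n$, and $n$ may far exceed the finitely many bits of $D$ decoded so far. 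So the decoder is circular: it needs long initial segments of $D$ to reconstruct the very classes from which it is supposed to decode $D$. Your claim that ``$D$ is used only to select indices'' is therefore false for the construction as you describe it --- $D$ is consumed essentially at the diagonalization stages. (The forward direction and the recovery of $\zj$ alone are fine, since the first-exit rule needs only the $\tau_s$ and $l(s)$.)

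The paper avoids this by not separating the two tasks. It first disposes of the case $D\le_T A\oplus\zj$ via Theorem~\ref{thm: cupping}, and in the remaining case $D\not\le_T A\oplus\zj$ it merges coding and diagonalization into a single odd stage: the index $\la D(e),k\ra$ of the chosen $\rho$ codes a bit of $D$, while the diagonalization uses not Sacks's theorem but a majority-voting argument --- if no $k$ satisfied $d(\rho_{\la D(e),k\ra},\{X\in\hat P_s:\Gamma_e^X(k)=D(k)\})\le\frac{1}{2}\, d(\rho_{\la D(e),k\ra},\hat P_s)$, then $A\oplus\zj$ would compute $D$, contradicting the case assumption. The class $P_{s+1}$ is then carved out of $\hat P_s$ by a four-way case analysis on the densities of $\{X:\Gamma_e^X(k)\downarrow=0\}$ and $\{X:\Gamma_e^X(k)\downarrow=1\}$, arranged so that $\zj$ together with $\tau_{s+1}$ and (in the one case where it matters) the value $\Gamma_e^R(k)$ determines an index for $P_{s+1}$ with no prior knowledge of $D$. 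That index-recoverability is exactly the property your separate diagonalization stages destroy; to salvage your plan you would have to either make your $P_{s+1}$ at diagonalization stages recoverable without $D$, or dynamically schedule enough coding stages before each diagonalization stage to supply $D\until n$ in advance --- neither of which your proposal provides.
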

\begin{proof}
If $D \le_T A \oplus \zj$ then the result follows from the proof of Theorem~\ref{thm: cupping}, so we will assume that 
$D \not \le_T A \oplus \zj$.

\emph{Construction.} Define $f:\omega\rightarrow \omega$ by $f(s)= 2^{-5s-1}$. 
At stage $0$, let $\tau_0 = \lambda$. Let $P_0$ be the complement of the first level of the universal Martin-L\"of test. 
At stage $s+1$, let $l(s) = |\tau_s| + 5s+2$. Define $\hat{P}_s = P_s \cap Q_{l(s)}(A)[m(s)]$. 
Provided that $d(\tau_s, P_s) \ge f(s)$, then by a simple comparison of set sizes, 
we have that $d(\tau_s, \hat{P}_s \cap Q_{l(s)}(A)) \ge f(s)/2$. 

If $s+1$ is odd, then let $e=(s-1)/2$. Apply Lemma~\ref{lem: seq} to obtain a prefix-free sequence of strings $\la \rho_i \colon i \in \omega \ra$ by taking $P$, $Q$, $\tau$ and $\delta$ to be $\hat{P}_s$, $Q_{l(s)}(A)$, $\tau_s$ and $f(s)/2$ respectively. 
We claim that there is a $k$ such that 
\[
d(\rho_{\la D(e),k\ra}, \{X\in \hat{P}_s \colon \Gamma_e^X(k) = D(k)\}) \le \frac{1}{2}\; d(\rho_{\la D(e),k\ra},\hat{P}_s).
\]
This claim holds because otherwise $D$ could be computed from $A\oplus \zj$ by majority voting within $\hat{P}_s$. Specifically, for any $k$, determine the string $\rho_{\la D(e),k\ra}$, then using $\zj$ determine for which value $i \in \{0,1\}$, $\mu \{ X\in \hat{P}_s \cap [\rho_{\la D(e),k\ra}] \colon \Gamma_e^X(k)=i \} > \frac{1}{2} \mu (\hat{P}_s \cap [\rho_{\la D(e),k\ra}])$. By assumption, $D(k)=i$.
 
From $A\oplus D \oplus \zj$ we can find a $k$ such that
\begin{equation}\label{eqn: usable}
d(\rho_{\la D(e),k\ra}, \{X\in \hat{P}_s \colon \Gamma_e^X(k) = D(k)\}) < \frac{9}{16}\; d(\rho_{\la D(e),k\ra},\hat{P}_s).
\end{equation}
Let $\tau_{s+1} =\rho_{\la D(e), k\ra}$ for this $k$. This string $\tau_{s+1}$ allows us to encode the value of $D(e)$, and at the same time meet the requirement that $\Gamma^R_e \ne D$ by a judicious choice of $P_{s+1}$. 
Define $P_{s+1}$ as follows. Let $F= \{X \in2^\omega: \Gamma_e^X(k) \downarrow = 1 \}$,
and $G = \{X \in 2^\omega: \Gamma_e^X(k) \downarrow = 0 \}$ be $\Sigma^0_1$ classes. If any of the following conditions apply, then define $P_{s+1}$ as per the first condition that is found to hold by a $\emptyset'$ search. 
\begin{enumerate}
\item \label{l1} If $\displaystyle d(\tau_{s+1}, \hat{P}_s \cap(F \cup G)) < \frac{7}{8}\; d(\tau_{s+1}, \hat{P}_s)$, then let 
$P_{s+1} = \hat{P}_s \setminus(F \cup G)$. 
\item \label{l2} If $\displaystyle d(\tau_{s+1}, \hat{P}_s \cap F) > \frac{9}{16}\; d(\tau_{s+1}, \hat{P}_s)$, then $D(k)=0$ by \eqref{eqn: usable}, and so define 
$P_{s+1} = \hat{P}_s \setminus G$.
\item \label{l3} If $\displaystyle d(\tau_{s+1}, \hat{P}_s \cap G) > \frac{9}{16}\; d(\tau_{s+1}, \hat{P}_s)$, then $D(k)=1$ by \eqref{eqn: usable}, and so define 
$P_{s+1} = \hat{P}_s \setminus F$.
\item \label{l4} If $\displaystyle d(\tau_{s+1}, \hat{P}_s \cap F) > \frac{1}{4}\; d(\tau_{s+1}, \hat{P}_s)$ and $\displaystyle d(\tau_{s+1}, \hat{P}_s \cap G) > \frac{1}{4}\; d(\tau_{s+1}, \hat{P}_s)$, then if $D(k)=0$, let $P_{s+1} = \hat{P}_s \cap F[n]$ where $n$ is the least number such that $d(\tau_{s+1} ,\hat{P}_s \cap F[n]) \ge d(\tau_{s+1}, \hat{P}_s)/8$. If $D(k)=1$, then do the same with $F$ replaced by $G$.
\end{enumerate}
Note that one of these conditions must hold. If \eqref{l1} and \eqref{l2} both fail, then 
\[ d(\tau_{s+1}, \hat{P}_s \cap G) \geq \left(\frac{7}{8}-\frac{9}{16}\right)\; d(\tau_{s+1}, \hat{P}_s) > \frac{1}{4}\; d(\tau_{s+1}, \hat{P}_s),\]
 so the second conjunct in \eqref{l4} holds. If \eqref{l1} and \eqref{l3} both fail, we get the first conjunct in the same way. Also note that however we have defined $P_{s+1}$, we have 
 \[d(\tau_{s+1}, P_{s+1}) \geq d(\tau_{s+1}, \hat{P}_s)/8 \geq d(\tau_{s}, \hat{P}_s)/16 \geq d(\tau_s, P_s)/32 \geq f(s+1).\]

If $s+1$ is even, then act as in the construction of Theorem~\ref{thm: cupping}.

\emph{Verification.} The odd stages in the construction ensure that $R \not \ge_T D$. At stage $s+1=2e+1$, whichever condition \eqref{l1}--\eqref{l4} is used to define $P_{s+1}$ ensures that there is some $k$ such that no element of $P_{s+1}$ correctly computes $D(k)$ via $\Gamma_e$.
It is still the case that $A\oplus R \ge_T \zj$ by the same argument given in the proof of Theorem~\ref{thm: cupping}. 
We will show that using $A\oplus R$ we can determine indices for the $\Pi^0_1$ classes used in the construction. 
Note that once we have determined $\tau_{s+1}$, then $\emptyset'$ can determine which condition \eqref{l1}--\eqref{l4} is used to define $P_{s+1}$. We only need $D$ in this step if \eqref{l4} is used. In this case, $P_{s+1}$ is defined to be either $ \hat{P}_s \cap F[n]$ or $\hat{P}_s \cap G[n]$. But this means that $\Gamma_e^R(k)\downarrow$, and $F$ is used in the construction if and only if $\Gamma_e^R(k)\downarrow=1$. The $n$ is computable from $\zj$. This allows us to compute an index for $P_{s+1}$. 

From the index for $P_s$, along with $A$ and $R$, we can determine the index of the string $\rho_{\la i, k\ra}$ that is equal to $\tau_{s+1}$ when $s+1$ is odd. Hence we know that $D(e)=i$ for $e= (s-1)/2$. Finally, the even stages of the construction ensure that $R'$ is computable from the construction. Hence $R' \le_T A\oplus D \oplus \zj$, which we have just shown is Turing equivalent to $A\oplus R$.
\end{proof}

Theorem~\ref{thm: more cupping} lets us characterize $K$-triviality in terms of its degree-theoretic interaction with Martin-L\"of random sequences \emph{without} mentioning $\zj$.

\begin{corollary}
If $A\subseteq\omega$, then $A$ is not $K$-trivial if and only if for all $D>_T\emptyset$, there is a Martin-L\"of random $R\ngeq_T D$ such that and $A\oplus R\geq_T D$.
\end{corollary}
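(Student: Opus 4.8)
The plan is to deduce this corollary directly from Theorem~\ref{thm: more cupping} together with Theorem~\ref{thm: notcuppable}, treating the two directions of the biconditional separately. For the forward direction, suppose $A$ is not $K$-trivial and fix any $D >_T \emptyset$. Theorem~\ref{thm: more cupping} hands us a Martin-L\"of random $R$ with $A \oplus R \equiv_T A \oplus D \oplus \zj$ and $R \not\ge_T D$. The condition $R \not\ge_T D$ is exactly the first half of what I want, so the only remaining task is to extract $A \oplus R \ge_T D$. But this is immediate: $A \oplus R \equiv_T A \oplus D \oplus \zj \ge_T D$, since $D$ is trivially computable from $A \oplus D \oplus \zj$. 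So the forward direction costs essentially nothing beyond quoting the theorem.

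For the reverse direction I would argue contrapositively. Suppose $A$ \emph{is} $K$-trivial; I must produce a witness $D >_T \emptyset$ for which no Martin-L\"of random $R \ge_T D$ satisfying $A \oplus R \ge_T D$ exists --- that is, I must defeat the ``for all $D$'' clause. The natural choice is $D = \zj$. If there were a Martin-L\"of random $R$ with $R \not\ge_T \zj$ and $A \oplus R \ge_T \zj$, then $R$ would be an incomplete random (incomplete precisely because $R \not\ge_T \zj$) witnessing that $A$ is weakly ML-cuppable. This contradicts Theorem~\ref{thm: notcuppable}, which says no $K$-trivial set is weakly ML-cuppable. Hence with $D = \zj$ the quantified statement fails, completing the contrapositive.

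The only point requiring a moment of care is the translation between the phrase ``incomplete Martin-L\"of random'' in the definition of weak ML-cuppability and the phrase ``$R \not\ge_T D$'' in the corollary's statement, but with $D = \zj$ these coincide: $R$ incomplete means $R \not\ge_T \zj = D$, and $A \oplus R \ge_T D$ means $A \oplus R \ge_T \zj$, which is exactly $A \oplus R \ge_T \zj$ in the cuppability definition. I expect the main (indeed essentially only) subtlety to be notational bookkeeping rather than any mathematical obstacle, since all the heavy lifting has already been done in Theorems~\ref{thm: notcuppable} and~\ref{thm: more cupping}; the corollary is a clean repackaging that removes explicit mention of $\zj$ by quantifying over all non-computable $D$.
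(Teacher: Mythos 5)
Your proposal is correct and follows exactly the paper's own argument: the forward direction quotes Theorem~\ref{thm: more cupping} and reads off $A\oplus R\geq_T D$ from $A\oplus R\equiv_T A\oplus D\oplus\zj$, and the reverse direction takes $D=\zj$ and invokes Theorem~\ref{thm: notcuppable} (non-weak-ML-cuppability of $K$-trivials). Your extra remark identifying ``incomplete'' with ``$R\not\geq_T\zj$'' is the same bookkeeping the paper leaves implicit.
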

\begin{proof}
If $A$ is not $K$-trivial and $D>_T\emptyset$, then Theorem~\ref{thm: more cupping} gives us the necessary Martin-L\"of random. On the other hand, if $A$ is $K$-trivial, then let $D=\zj$ and use the fact that $A$ is not weakly ML-cuppable.
\end{proof}

%

Slaman and Steel extended the Posner-Robinson theorem to show that any non-computable set $A$ that is strictly Turing below $\zj$ can be cupped to $\zj$ with a 1-generic set $X$ such that $A$ and $X$ form a minimal pair \cite{Slam_Stee_1989}. The analogous result for $A$ not $K$-trivial and $X$ Martin-L\"of random does not hold. 
Any Martin-L\"of random computes a diagonally non-computable function. Ku\v{c}era showed that if $A$ and $B$ both compute diagonally non-computable functions and are both below $\zj$, then $A$ and $B$ do not form a minimal pair \cite{Kuce_1986}.
 Hence no Martin-L\"of random set below $\zj$ forms a minimal pair with any set $A$ below $\zj$ that computes a diagonally non-computable function.

The problem with adding minimal pair requirements to the construction used in the proof of Theorem~\ref{thm: cupping} is that $\zj$ cannot enumerate the non-computable, $A$-computable sets. However, $A''$ can and hence we can obtain the following.

\begin{corollary}
If $A$ is a set that is not $K$-trivial and $X \ge_T A''$, then there exists an incomplete Martin-L\"of random set $R$ such that $A\oplus R \equiv_T X$ and $A$ and $R$ form a minimal pair. 
\end{corollary}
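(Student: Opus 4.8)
The plan is to run a version of the construction from the proof of Theorem~\ref{thm: cupping}, carried out relative to $X$ rather than $\zj$, with two new families of requirements: minimal pair requirements against $A$, and diagonalization requirements forcing $R$ to be incomplete. Since $X \ge_T A''$, the oracle $X$ can do everything $\zj$ did in the earlier proofs and, crucially, can also decide $\Pi^0_2(A)$ and $\Sigma^0_2(A)$ questions (note also $A \le_T A'' \le_T X$ and $\zj \le_T A'' \le_T X$). As before I would build a nested sequence $\{(\tau_s, P_s)\}$ with $\tau_{s+1} \succeq \tau_s$, $P_{s+1} \subseteq P_s$, and $d(\tau_s, P_s) \ge f(s)$ for a suitable $f$, take $R = \bigcup_s \tau_s \in \bigcap_s P_s$, and keep $R$ inside the complement of the first level of the universal test so that $R$ is Martin-L\"of random. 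At every stage I would retain the ``leave $Q_{l(s)}(A)$'' step from Theorem~\ref{thm: cupping}: choosing $\tau_{s+1}$ to exit $Q_{l(s)}(A)[m(s)]$ via Lemma~\ref{lem: extension} guarantees, exactly as there, that the least exit stage $t_s$ dominates the settling-time function $m$ of $\zj$, so that $A \oplus R \ge_T \zj$.

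The heart of the argument is the minimal pair requirement $\mathcal{M}_{i,e}$: if $\Gamma_i^A$ and $\Gamma_e^R$ are both total and equal, their common value must be computable. At a stage assigned to $\mathcal{M}_{i,e}$ I would first use $A''$ to decide whether $\Gamma_i^A$ is total; if not, the requirement is vacuous and I do nothing. If $\Gamma_i^A = g$ is total, then $g$ is $A$-computable, and I would search, using $A''$, for an $n$ with
\[ \mu\{Y \in [\tau_{s+1}] : \Gamma_e^Y \upharpoonright n = g \upharpoonright n\} < \varepsilon \cdot 2^{-|\tau_{s+1}|}, \]
where $\varepsilon$ is small relative to the current density. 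The existence of such an $n$ is a $\Sigma^0_2(A)$ question, since the measure of the relevant $\Sigma^0_1(A)$ class being at most a rational is $\Pi^0_1(A)$; so $A''$ can decide it. If $n$ exists, I remove from $\hat{P}_s$ all $Y$ with $\Gamma_e^Y \upharpoonright n = g \upharpoonright n$, which costs only $\varepsilon$ in density and forces $\Gamma_e^R \ne g$. If no such $n$ exists, then $\mu\{Y : \Gamma_e^Y = g\} > 0$, so the cone $\{Y : Y \ge_T g\}$ has positive measure, whence $g$ is computable by Sacks's theorem and the requirement is automatically met with no action. This is precisely the step that $\zj$ alone cannot perform and for which $A''$ is needed.

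For $A \oplus R \equiv_T X$, the easy direction $A \oplus R \le_T X$ holds because the whole construction is $X$-computable, so $R \le_T X$. For $A \oplus R \ge_T X$ I would code the bits of $X$ into the choice of $\tau_{s+1}$ at dedicated coding stages, exactly as $D$ was coded in Theorem~\ref{thm: more cupping} using the prefix-free sequence of Lemma~\ref{lem: seq}: setting $\tau_{s+1} = \rho_{\la X(e),k\ra}$ records $X(e)$, which is later recovered from $A$, $R$, and the recovered $\zj$. The main obstacle is checking that $A \oplus R$ can genuinely reconstruct the construction stage by stage, since the minimal pair stages consulted $A''$ rather than merely $\zj$. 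I expect to resolve this by scheduling: each $\mathcal{M}_{i,e}$ action depends on only finitely much of $A''$, and $A'' \le_T X$, so I can insert enough coding stages before it that by the time the recovery reaches that stage it has already decoded the finite initial segment of $X$ needed to recompute the $A''$-decision. This is exactly where the hypothesis $X \ge_T A''$ (rather than merely $X \ge_T \zj$) is indispensable, and getting the bookkeeping of this bootstrap right is the part I expect to require the most care.

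Finally, incompleteness ($R \not\ge_T \zj$) I would secure by dedicated stages diagonalizing against $\Gamma_e^R = \zj$: since $\zj$ is non-computable, Sacks's theorem again lets me find $n$ with $\mu\{Y : \Gamma_e^Y \succ \zj \upharpoonright n\}$ small and delete those $Y$ from $P_{s+1}$, just as in the odd stages of Theorem~\ref{thm: cupping} with $D = \zj$. Interleaving the coding, minimal pair, incompleteness, and ``leave $Q_{l(s)}(A)$'' actions while keeping each density loss summably small yields a Martin-L\"of random $R$ with all the required properties: the minimal pair requirements make $A$ and $R$ a minimal pair, the incompleteness requirements keep $R$ incomplete, and the coding together with the settling-time domination gives $A \oplus R \equiv_T X$.
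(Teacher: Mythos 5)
Your proposal is correct and follows exactly the route the paper intends: the paper offers no proof of this corollary beyond the preceding remark that $A''$ (unlike $\zj$) can settle the relevant questions about total $A$-computable functions, and you have fleshed out precisely that plan---running the construction of Theorem~\ref{thm: cupping} relative to $X$, adding Sacks-measure diagonalization for the minimal-pair and incompleteness requirements, and coding $X$ into the choice of extensions as in Theorem~\ref{thm: more cupping}. Your treatment of the one genuinely delicate point---recovering the $A''$-decisions during the $A\oplus R$ verification by coding enough of $X$ before each minimal-pair stage acts---is the right fix and is where the real work lies.
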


\bibliographystyle{plain}
\bibliography{bibliography}

\begin{thebibliography}{10}

\bibitem{BiHoMiNi:12}
Laurent Bienvenu, Rupert H{\"o}lzl, Joseph~S. Miller, and Andr{\'e} Nies.
\newblock {The Denjoy alternative for computable functions}.
\newblock In Christoph D{\"u}rr and Thomas Wilke, editors, {\em 29th
  International Symposium on Theoretical Aspects of Computer Science (STACS
  2012)}, volume~14 of {\em Leibniz International Proceedings in Informatics
  (LIPIcs)}, pages 543--554, Dagstuhl, Germany, 2012. Schloss
  Dagstuhl--Leibniz-Zentrum fuer Informatik.

\bibitem{Down_Hirs_2010}
Rod~G. Downey and Denis~R. Hirschfeldt.
\newblock {\em Algorithmic Randomness and Complexity}.
\newblock Springer-Verlag, 2010.

\bibitem{DHMN_2005}
Rod~G. Downey, Denis~R. Hirschfeldt, Joseph~S. Miller, and Andr{{\'e}} Nies.
\newblock Relativizing {C}haitin's halting probability.
\newblock {\em J. Math. Log.}, 5(2):167--192, 2005.

\bibitem{Fran_Ng_2011}
Johanna N.~Y. Franklin and Keng~Meng Ng.
\newblock Difference randomness.
\newblock {\em Proc. Amer. Math. Soc.}, 139(1):345--360, 2011.

\bibitem{Hirs_Nies_Step_2006}
Denis~R. Hirschfeldt, Andr{\'e} Nies, and Frank Stephan.
\newblock Using random sets as oracles.
\newblock {\em J. Lond. Math. Soc.}, 75(3):610--622, 2007.

\bibitem{Kjos_Mill_Reed_toappear}
Bj{\o}rn Kjos-Hansen, Joseph~S. Miller, and Reed Solomon.
\newblock Lowness notions, measure and domination.
\newblock To appear in the \emph{J. Lond. Math. Soc.}

\bibitem{Kuce_1985}
Anton{\'{\i}}n Ku{\v{c}}era.
\newblock Measure, {${\Pi}^0_1$} classes and complete extensions of {PA}.
\newblock In H.~D. Ebbinghaus, G.~H. M\"{u}ller, and G.~E. Sacks, editors, {\em
  Recursion Theory Week. Proceedings of the Conference Held at the
  {M}athematisches {F}orschungsinstitut in {O}berwolfach, April 15-21, 1984},
  volume 1141 of {\em Lecture Notes in Mathematics}, pages 245--259, Berlin,
  1985. Springer.

\bibitem{Kuce_1986}
Anton{\'{\i}}n Ku{\v{c}}era.
\newblock An alternative, priority-free, solution to {P}ost's problem.
\newblock In {\em Mathematical foundations of computer science, 1986
  ({B}ratislava, 1986)}, volume 233 of {\em Lecture Notes in Comput. Sci.},
  pages 493--500. Springer, Berlin, 1986.

\bibitem{Mill_Nies_2006}
Joseph~S. Miller and Andr{\'e} Nies.
\newblock Randomness and computability: open questions.
\newblock {\em Bull. Symbolic Logic}, 12(3):390--410, 2006.

\bibitem{Nies_2005}
Andr{\'e} Nies.
\newblock Lowness properties and randomness.
\newblock {\em Adv. Math.}, 197(1):274--305, 2005.

\bibitem{Nies_2007}
Andr{{\'e}} Nies.
\newblock Non-cupping and randomness.
\newblock {\em Proc. Amer. Math. Soc.}, 135(3):837--844, 2007.

\bibitem{Nies_2009}
Andr{\'e} Nies.
\newblock {\em Computability and {R}andomness}.
\newblock Oxford University Press, 2009.

\bibitem{Posn_Robi_1981}
David~B. Posner and Robert~W. Robinson.
\newblock Degrees joining to {${\bf 0}^{\prime} $}.
\newblock {\em J. Symbolic Logic}, 46(4):714--722, 1981.

\bibitem{Sacks_1963}
Gerald~E. Sacks.
\newblock {\em Degrees of {U}nsolvability}.
\newblock Princeton University Press, 1963.

\bibitem{Simp_2007b}
Stephen~G. Simpson.
\newblock Almost everywhere domination and superhighness.
\newblock {\em MLQ Math. Log. Q.}, 53(4-5):462--482, 2007.

\bibitem{Slam_Stee_1989}
Theodore~A. Slaman and John~R. Steel.
\newblock Complementation in the {T}uring degrees.
\newblock {\em J. Symbolic Logic}, 54(1):160--176, 1989.

\bibitem{Zamb_1990}
Domenico Zambella.
\newblock On sequences with simple initial segments.
\newblock Technical Report ML-1990-05, The Institute for Logic, Language and
  Computation (ILLC), University of Amsterdam, 1990.

\end{thebibliography}

\end{document}